\def\blfootnote{\gdef\@thefnmark{}\@footnotetext}
 \newtheorem{thm}{Theorem}[section]
 \newtheorem{prop}[thm]{Proposition}
 \newtheorem{lem}[thm]{Lemma}
 \newtheorem{cor}[thm]{Corollary}
\theoremstyle{definition}
 \newtheorem{rem}[thm]{Remark}
 \newtheorem{definition}[thm]{Definition}
\numberwithin{equation}{section}
\def\d#1{{#1\kern-0.4em\char"16\kern-0.1em}}
\def\D#1{{\raise0.2ex\hbox{-}\kern-0.4em#1}}
\def \Dj{\mbox{\raise0.3ex\hbox{-}\kern-0.4em D}}
\definecolor{britishracinggreen}{rgb}{0.0, 0.26, 0.15}
\def\zn{,\kern-0.09em,}
\title{From Heegaard diagrams to surgery}
\date{}
\begin{document}
\author[J. Nikoli\' c, V. Ovaskainen and Z. Petri\'c]{J. Nikoli\' c\textsuperscript{1,}$^{\ast}$, V. Ovaskainen\textsuperscript{1} and Z. Petri\' c\textsuperscript{2}}
\address{\scriptsize{\textsuperscript{1}Faculty of Mathematics, University of Belgrade, Studentski trg 16, 11158 Belgrade, Serbia\\
e-mails: jovana.nikolic@matf.bg.ac.rs, vukovaskainen@gmail.com}}
\address{\scriptsize{\textsuperscript{2}Mathematical Institute SANU, Knez Mihailova 36, p.f.\ 367, 11001 Belgrade, Serbia\\
e-mail: zpetric@mi.sanu.ac.rs}}
\thanks{$^\ast$Corresponding author.}
\begin{abstract}
The precise steps of a procedure of going from Heegaard diagrams to framed
link diagrams are introduced in this note.
\end{abstract}
\blfootnote{ {\it Mathematics Subject Classification} ({\it
        2020}): 57-01, 57K30.}
\blfootnote{{\it Key words and phrases}: 3-manifolds, Kirby's calculus, Heegaard diagram, surgery presentation.}
\maketitle

\section{Introduction}

By a \emph{surface} we mean here a closed, connected and orientable
surface and a \emph{manifold} is a closed oriented three-dimensional
manifold. (Homeomorphisms between manifolds are orientation preserving.) Such manifolds are presentable in terms of diagrams in several
ways, and we mention here the presentation in terms of Heegaard diagrams
and in terms of framed links in $S^3$. The latter is usually called
\emph{surgery presentation}. The aim of this note is to provide a precise
steps in transforming planar Heegaard diagrams into a surgery. The details of a
reverse transformation are given, for example, in \cite{A17}.

We are aware that this procedure is considered as a folklore and
that it could be traced back to Lickorish's results from \cite{L62}.
However, when we were faced with a concrete problem of finding surgery
presentation for some small covers (see \cite{DJ91} or \cite{BP15} for
this notion), a precise algorithm for doing this was not reachable in the
literature. A companion to the algorithm provided in this note is a simple
example of a small cover obtained by an identification of the facets of
eight tetrahedra. This identification gives immediately a (demanding)
Heegaard diagram of the obtained manifold (the real projective space), and
this diagram could help one to follow the steps of the algorithm. Besides
the precision in defining curves and Dehn twists, we ought to point out
some heuristics that could considerably shorten the procedure of
transforming Heegaard diagrams into a surgery and to produce a result of
lower complexity.

\section{Heegaard diagrams, surgery and Dehn twists}\label{heegaard}

We start by fixing the terminology and the notation.

\begin{definition}
Let $\Gamma$ be a surface of genus $g$. A \emph{system of attaching
circles} for $\Gamma$ is
a set $\{\gamma_1,\ldots,\gamma_g\}$ of simple closed curves in this
surface such that:
\begin{enumerate}
\item the curves $\gamma_i$ are mutually disjoint,
\item $\Gamma-\gamma_1-\ldots-\gamma_g$ is connected.
\end{enumerate}
Let $\alpha$ and $\beta$ be two systems of attaching circles for $\Gamma$.
The triple $(\Gamma,\alpha,\beta)$ is a \emph{Heegaard diagram} of \emph{genus} $g$ (see
\cite{PS}, \cite{OS06} and \cite{J07}).
\end{definition}

\begin{rem}\label{rem1}
Let $\Gamma$ and $\Delta$ be two surfaces of genus $g$. If
$\{\gamma_1,\ldots,\gamma_g\}$ and $\{\delta_1,\ldots,\delta_g\}$ are
systems of attaching circles for $\Gamma$ and $\Delta$, respectively, then
there exists a homeomorphism from $\Gamma$ to $\Delta$, such that for
every $1\leq i\leq g$ it maps $\gamma_i$ to $\delta_i$. If $\Gamma$ and
$\Delta$ are oriented, one can choose this homeomorphism to be either
orientation preserving, or orientation reversing.
\end{rem}

\begin{definition}\label{meridian}
Let $H$ be a handlebody with $g$ handles. A \emph{meridional disk} is a
properly embedded disk in $H$ (its boundary belongs to the boundary of
$H$) such that by removing its neighbourhood from $H$ one obtains a
handlebody with $g-1$ handles. A \emph{complete system of meridional
disks} for $H$ consists of $g$ mutually disjoint disks such that by
removing their neighbourhoods from $H$ one obtains a ball.
\end{definition}

\begin{rem}\label{rem2}
The boundaries of a complete system of meridional disks for $H$ make a
system of attaching circles for $\partial H$. We call this system
\emph{meridians} of $H$.
\end{rem}

From Remarks \ref{rem1} and \ref{rem2} we conclude the following.

\begin{cor}\label{posledica4}
Let $\{\gamma_1,\ldots,\gamma_g\}$ be a system of attaching circles for a
connected, orientable surface $\Gamma$. Let $H$ be a handlebody with $g$
handles and $\{\mu_1,\ldots,\mu_g\}$ its complete system of meridional
disks. Then there exists a homeomorphism
$h\colon\Gamma\stackrel{\approx\;\:}{\to}\partial H$ such that for every
$1\leq j\leq g$ it maps $\gamma_j$ to~$\partial \mu_j$.
\end{cor}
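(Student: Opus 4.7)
The plan is to derive this as essentially immediate from the two preceding remarks, so I would not expect to do any nontrivial topology in the argument; the work reduces to lining up the data correctly so that Remark \ref{rem1} applies.

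First I would observe that the hypothesis ``$\{\gamma_1,\ldots,\gamma_g\}$ is a system of attaching circles for $\Gamma$'' already forces $\Gamma$ to be a closed orientable surface of genus $g$, since the definition ties the cardinality of such a system to the genus of the ambient surface. The corollary only states that $\Gamma$ is connected and orientable, so I would record this sharpening explicitly at the start of the proof.

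Next I would turn to the handlebody side. Applying Remark \ref{rem2} to the complete system of meridional disks $\{\mu_1,\ldots,\mu_g\}$ for $H$ gives that $\{\partial\mu_1,\ldots,\partial\mu_g\}$ is a system of attaching circles for $\partial H$. Since $H$ is a handlebody with $g$ handles, its boundary $\partial H$ is a closed orientable surface of genus $g$.

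At this point the hypotheses of Remark \ref{rem1} are precisely in place: both $\Gamma$ and $\partial H$ are oriented surfaces of genus $g$, and each carries a chosen system of $g$ attaching circles. I would then invoke that remark with $\Delta=\partial H$ and $\delta_j=\partial\mu_j$, reading off a homeomorphism $h\colon\Gamma\to\partial H$ that sends $\gamma_j$ to $\partial\mu_j$ for every $1\le j\le g$. I do not anticipate a genuine obstacle; the only point that requires care is verifying that the number of attaching circles matches the genus on both sides, and that is exactly what the previous two paragraphs have arranged.
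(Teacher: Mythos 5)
Your proposal is correct and matches the paper's own reasoning: the corollary is stated there as an immediate consequence of Remarks \ref{rem1} and \ref{rem2}, obtained exactly by noting that the meridian boundaries form a system of attaching circles for $\partial H$ and then applying Remark \ref{rem1} with $\Delta=\partial H$ and $\delta_j=\partial\mu_j$. Your explicit bookkeeping that both surfaces have genus $g$ is a harmless (and reasonable) elaboration of what the paper leaves implicit.
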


\begin{definition}
Let $M$ be a manifold and let $\Gamma$, $H$ and $H'$ be respectively a
surface, and two handlebodies embedded in $M$ such that $\partial
H=\partial H'=\Gamma=H\cap H'$ and $M=H\cup H'$. The triple
$(\Gamma,H,H')$ is a \emph{Heegaard splitting} of $M$.
\end{definition}

\begin{definition}
A Heegaard diagram $(\Gamma,\alpha,\beta)$ is \emph{compatible} with a
Heegaard splitting $(\Gamma,H,H')$ of a manifold, when $\alpha$
consists of meridians of $H$, while $\beta$ consists of meridians of $H'$.
\end{definition}

\begin{definition}
A Heegaard diagram \emph{presents} a manifold, when there exists a
Heegaard splitting of this manifold, which is compatible with this
diagram.
\end{definition}

The following remark is analogous to \cite[IV, Section~10.1,
Theorem~10.2]{PS} (see also \cite[Remark after Definition~2.3]{OS06}).

\begin{rem}\label{jedinstvenost}
Every Heegaard diagram presents a unique manifold.
\end{rem}
\begin{rem}\label{s3 diagram}
A sufficient condition for $(\Gamma,\alpha,\alpha')$, where $\Gamma$ is of
genus $g$, to present the sphere $S^3$ is that $\Gamma$ is a connected sum
$T^2_1\sharp\ldots\sharp T^2_g$ of $g$ copies of tori, such that
$\alpha_i$ and $\alpha'_i$ are two simple closed curves in $T^2_i$ that
intersect transversally in a single point. This condition is equivalent
to: for $i\neq j$, $\alpha_i\cap \alpha'_j=\emptyset$, and for every $i$,
$\alpha_i$ intersects transversally $\alpha'_i$ in a single point.
\end{rem}

\begin{definition}\label{proper emb}
Assume that $\Gamma$, $\alpha$ and $\alpha'$ satisfy the conditions of Remark~\ref{s3 diagram} and $H$, $H'$ are two copies of a handlebody with $g$ handles such that $H$ is standardly
embedded in $\mathbb{R}^3\subset S^3$ and the complement of its interior with respect to $S^3$ is $H'$. The identification of $\Gamma$ with $\partial H=\partial H'$ so
that $\alpha$ become meridians of $H$, while $\alpha'$ become meridians of $H'$ is called a \emph{proper embedding} of $\Gamma$ in $S^3$ with respect to $(\alpha,\alpha', H, H')$.
\end{definition}

\begin{rem}\label{HegardHom}
Let $(\Gamma,\alpha,\beta)$ be a Heegaard diagram of genus $g$ that
presents a manifold $M$ and let the curves $\alpha'$ be introduced in this diagram according to Remark~\ref{s3 diagram}. Consider a proper embedding of $\Gamma$ with respect to $(\alpha,\alpha',H,H')$.
If $h\colon \Gamma\to\Gamma$ is an orientation preserving homeomorphism that for every $1\leq i\leq g$ maps $\alpha'_i$ to a curve isotopic to
$\beta_i$, then $M$ is homeomorphic to the manifold $((H\times \{0\})\cup
(H'\times\{1\}))/\sim$, where $\sim$ is such that for every $x\in\Gamma$ we have $(h(x),0)\sim (x,1)$.
\end{rem}

We present Heegaard diagrams in the following form. By cutting $\Gamma$
along the curves from $\alpha$, one obtains a ``sphere with $2g$ holes''.
In this way every such curve appears as the boundary of two discs removed
from the sphere. The curves from $\beta$ intersecting the curves from
$\alpha$ are presented by paths connecting some points on the boundaries
of the removed discs. In this way, a Heegaard diagram is presented as a
part of the sphere (projected to the plane) and is called \emph{planar}.
Such a diagram should provide an unambiguous identification of the
endpoints of paths belonging to $\beta$. A system of attaching curves
$\alpha'$ is introduced in this diagram according to Remark~\ref{s3
diagram}.

A \emph{surgery} is a link in $S^3$ whose components are labeled by
integers (framings). A surgery is interpreted as a manifold by removing a
tubular neighbourhood (a solid torus) of each link component and sewing it
back according to the corresponding framing $n$---a meridian of the solid
torus is sewed back so that it winds $n$ times in the direction of the
meridian and once in the direction of the longitude (see \cite{PS}).

\begin{definition}\label{twist}
Let $l$ be a simple closed curve in an oriented surface $\Gamma$, and let
$A$ be an annulus embedded in $\Gamma$ with $l$ as one boundary component.
We define intuitively and up to isotopy the \emph{right Dehn twist}
$\theta_l\colon\Gamma\to\Gamma$ \emph{along} $l$. It is the identity in
the complement of $A$, and inside $A$ it takes a path transverse to the
core of $A$ to a path that wraps once around $A$ turning in the right-hand
direction (with respect to the orientation of $\Gamma$). It is evident
that $\theta_l$ is a homeomorphism and its inverse $\theta^{-1}_l$ is
called the \emph{left Dehn twist along} $l$. Both left and right Dehn
twists are illustrated in Figure~\ref{dehn twist} (see \cite[Introduction]{L64} for a more formal definition of [Dehn] twist).
\end{definition}

\begin{figure}[h!h!h!]
    \centering
    \includegraphics[width=1\textwidth]{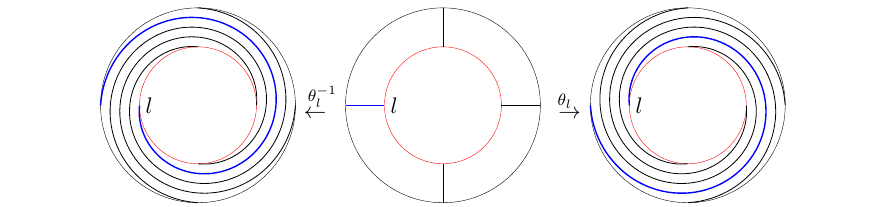}
    \caption{The left and the right Dehn twists}\label{dehn twist}
\end{figure}

\begin{definition}
Let $l$ and $l'$ be two oriented simple closed curves on $\Gamma$ that intersect transversally. The sign of an intersection point of $l$ with $l'$ is +1 when a pair of tangent vectors of $l$ and $l'$ in this order is consistent with an oriented basis for $\Gamma$, otherwise the sign is -1. The \emph{algebraic intersection number} $i_A(l,l')$ of $l$ with $l'$ is the sum of signs of intersection points of $l$ with~$l'$.
\end{definition}

\section{The algorithm}

Let $M$ be presented by a Heegaard diagram $(\Gamma,\alpha,\beta)$ in
which a family of curves $\alpha'$ is introduced according to Remark~\ref{s3
diagram}, so that $(\Gamma,\alpha,\alpha')$ presents $S^3$. Consider a proper embedding of $\Gamma$ with respect to $(\alpha,\alpha',H,H')$.

The first step in transforming this Heegaard diagram into a surgery for $M$  consists in finding a
composition of Dehn twists of $\Gamma$ that maps curves
$\alpha'_1,\ldots,\alpha'_g$ to the curves isotopic to
$\beta_1,\ldots,\beta_g$, respectively. This could be done along the lines
of the first part of the proof of \cite[Theorem~1]{L62}. In order to be
precise in determining right or left Dehn twists (the right twists have
priority in this note) assume that $\Gamma$ is oriented so that its normal
is directed towards the handlebody $H'$.

Our algorithm for finding a composition of Dehn twists of $\Gamma$ that
maps curves $\alpha'_1,\ldots,\alpha'_g$ to the curves isotopic to
$\beta_1,\ldots,\beta_g$ goes as follows.

\noindent Auxiliary steps ($p$ and $q$ are simple closed curves in
$\Gamma$ and for (D2b) and (D3) we assume that $p$ is oriented in an
arbitrary way):
\begin{itemize}
\item[(D1)] If $p$ intersects $q$ in a single point, then
$\theta_p\circ\theta_q$ maps $p$ to a curve isotopic to $q$. (See
\cite[Lemma~1]{L62}.)
\item[(D2a)] If $q$ belongs to a system of attaching circles and $p$
intersects $q$ twice, with zero algebraic intersection, then there is an
isotopy moving $p$ away from $q$ and leaving all the other curves from the
system fixed. (See the second paragraph of the proof of
\cite[Lemma~3]{L62}.)
\item[(D2b)] Let $A$ and $B$ be two intersection points of $p$ and $q$,
which are adjacent on $q$. We denote by $[A,B]$ the segment on $q$ having
no other common points with $p$. Assume that $p$ is oriented in the same
direction at $A$ and $B$ with respect to $q$ and by walking along $p$ in
its direction one has to turn \emph{left} at $A$ in order to continue the
walk along $[A,B]$.
    Then consider the curve $p_1$ that consists of the part of $p$
incoming to $A$, the segment $[A,B]$, and the part of $p$ outgoing
from $B$. (See \cite[Lemma~2, Case~1]{L62}.) The curve
$\theta_{p_1}[p]$ is isotopic to a curve intersecting $q$ in less
points than $p$.
\item[(D3)] Let $A$, $B$ and $C$ be three intersection points of $p$ and
$q$, which are adjacent on $q$ and the direction of $p$ induces the
circular order $ABC$. We denote by $[A,C]$ the segment on $q$ containing
$B$. Assume that $p$ is oriented in alternating directions at $A$, $B$ and
$C$ with respect to $q$ and by walking along $p$ in its direction one has
to turn \emph{left} at $A$ in order to continue the walk along $[A,C]$.
Then consider the curve $p_1$ that consists of the part of $p$ incoming to
$A$, the segment $[A,C]$, and the part of $p$ outgoing from $C$. (See
\cite[Lemma~2, Case~2]{L62}.) The curve $\theta_{p_1}[p]$ is isotopic to a
curve intersecting $q$ in less points than $p$.
\end{itemize}

The isotopies mentioned in (D1), (D2b) and (D3) do not move points outside
narrow neighbourhoods of $p$ and $q$, so that the curves disjoint from $p$
and $q$ remain fixed.

\noindent The algorithm:

\begin{itemize}
\item[(A1)] Suppose that for a fixed $0\leq t<g$, we have a composition $\delta$ of Dehn twists such that for $1\leq i\leq t$, $\delta[\alpha'_i]$ is isotopic to $\beta_i$.
\item[(A2)] Find a curve $\beta_{t+1}'=\delta[\alpha'_{t+1}]$. Since $\alpha'_1,\ldots,\alpha'_t,\alpha'_{t+1}$ are mutually disjoint, $\beta_{t+1}'$ is disjoint from $\beta_1,\ldots,\beta_t$.
\item[(A3)] If $\beta_{t+1}'$ is isotopic to $\beta_{t+1}$, then we go back to (A1) with $t=t+1$ and $\delta=\delta$.
\item[(A4)] If $\beta_{t+1}'$ does not intersect and is not isotopic to $\beta_{t+1}$, then try to find a curve $l$, or two curves $l$ and $m$, disjoint from all $\beta_i$, for $1\leq i\leq t$, such that in the sequence $\beta_{t+1}',l,\beta_{t+1}$ or $\beta_{t+1}',l,m,\beta_{t+1}$ the neighbours intersect in just one point and there are no other intersection points.
Then iterating (D1), one finds a composition $\delta'$ of Dehn twists that maps $\beta_{t+1}'$ to a curve isotopic to $\beta_{t+1}$, leaving all $\beta_i$, for $1\leq i\leq t$ fixed. We go back to (A1) with $t=t+1$ and $\delta=\delta'\circ \delta$.

If there are no such $l$ and $m$, then by using steps (D1), (D2a), (D2b) and (D3), one finds a composition $\delta''$ of Dehn twists mapping $\beta_{t+1}'$ into a curve isotopic to one that intersects less $\beta$-curves than $\beta_{t+1}'$. Denote this curve by $\beta_{t+1}''$. Moreover, $\delta''$ leaves all $\beta_i$, for $1\leq i\leq t$, fixed. Then go back to the beginning of (A4) with $\beta_{t+1}'=\beta_{t+1}''$ and $\delta=\delta''\circ\delta$.

We cannot remain forever in (A4) since the procedure from the preceding paragraph must eventually reach the point when $\beta_{t+1}'$ intersects no $\beta$-curve. This enables one to reason as in the third paragraph of the proof of \cite[Theorem~1]{L62} in order to find two curves $l$ and $m$ satisfying the conditions listed in the initial paragraph of~(A4).
\item[(A5)] If $\beta_{t+1}'$ intersects $\beta_{t+1}$, then by using steps (D2a), (D2b) and (D3), one finds a composition $\delta'$ of Dehn twists mapping $\beta_{t+1}'$ into a curve isotopic to $\beta_{t+1}''$ that either does not intersect $\beta_{t+1}$ or intersects it in a single point. In the first case go back to (A3), with $\beta_{t+1}'=\beta_{t+1}''$ and $\delta=\delta'\circ\delta$. In the second case go back to (A1) with $t=t+1$ and $\delta=\delta''\circ \delta'\circ \delta$, where $\delta''$ is a composition of Dehn twists obtained by (D1), which maps $\beta_{t+1}''$ to a curve isotopic to $\beta_{t+1}$. It is evident that this composition and the isotopy leave all $\beta_i$, for $1\leq i\leq t$ fixed.
\end{itemize}

After finding a composition $\theta_{\delta_n}\circ\ldots\circ\theta_{\delta_1}$ of Dehn twists that
maps $\alpha'$-curves onto $\beta$-curves, the next step in our procedure
is to build a link in $S^3$ out of the curves $\delta_1,\ldots,\delta_n$. This link consists of \emph{parallel copies} of $\delta_1,\ldots,\delta_n$ in the handlebody $H$ (see the proof of Proposition~\ref{proof}). This is done by
making a parallel copy of $\Gamma$ shallow inside $H$ (or ``digging the
trench''), layer by layer, so that $\delta_n$ is the deepest. In order to calculate the framing of each component of the link we rely on the following remark (that stems from the proof of Proposition~\ref{proof}) and lemma.
\begin{rem}\label{rem4.1}
Let us consider the (right) Dehn twist $\theta_l$ with respect to a curve $l$ on
$\Gamma$ whose parallel copy in $H$ makes a component $l'$ of the link.
Let $k\in \mathbb{Z}$ be the self-linking of $l\subseteq\Gamma$, i.e.\ the
linking number of $l$ and $l'$, assuming they are codirected. Then the framing of $l'$ is $k+1$. (It
should be noted that this $+1$ comes from the fact that $\theta_l$ is a
right Dehn twist with respect to the orientation of $\Gamma$ whose normal
is directed outside of $H$---if one switches from right to left twists,
i.e.\ to $\theta_l^{-1}$, then the framing of $l'$ is $k-1$, but there is
no need for this here.)
\end{rem}

For the following lemma we assume that for every $1\leq j\leq g$, the curves $\alpha_j$ and $\alpha'_j$ are oriented so that the intersection number $i_A(\alpha_j,\alpha'_j)$ is $-1$. Moreover, the embedding of $\Gamma$ is proper with respect to $(\alpha,\alpha',H,H')$.

\begin{lem}\label{lemma}
     Let $(\Gamma,\alpha,\alpha')$ be a Heegard diagram of genus $g$ for $S^3$, and $l$ be an arbitrarily oriented simple closed curve in $\Gamma$. The self-linking number of $l$ is equal to
     \begin{equation}\label{selflinking}
     \sum_{j=1}^g i_A(l,\alpha_j)\cdot i_A(l,\alpha'_j).
     \end{equation}
\end{lem}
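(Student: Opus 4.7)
The plan is to pass to homology and use bilinearity. By Remark~\ref{s3 diagram} the surface $\Gamma$ decomposes as $T^2_1\sharp\cdots\sharp T^2_g$, with $\alpha_j,\alpha'_j$ meeting once on $T^2_j$ and each pair disjoint from the curves on the other summands, so $\{[\alpha_j],[\alpha'_j]:1\le j\le g\}$ is a symplectic basis of $H_1(\Gamma;\mathbb{Z})$. Write $[l]=\sum_{j=1}^g(a_j[\alpha_j]+b_j[\alpha'_j])$. Using $i_A(\alpha_j,\alpha'_j)=-1$ and antisymmetry of the intersection form on $\Gamma$, a one-line expansion gives $i_A(l,\alpha_k)=b_k$ and $i_A(l,\alpha'_k)=-a_k$, so the right-hand side of \eqref{selflinking} equals $-\sum_k a_k b_k$. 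It therefore suffices to show that the self-linking of $l$ also equals $-\sum_k a_k b_k$.

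For this I would produce an explicit 2-chain $C$ in $S^3$ with $\partial C=l$ and compute $\operatorname{lk}(l,\mu)=\mu\cdot C$, where $\mu$ is the parallel copy of $l$ pushed from $\Gamma$ into $H$. Let $D_k\subset H$ be the meridional disk with $\partial D_k=\alpha_k$. Since the meridians $\alpha'_k$ of $H'$ vanish in $H_1(H')$ while the $\alpha_k$ become longitudes there, $[l]_{H'}=\sum_k a_k[\alpha_k]_{H'}$; consequently $l-\sum_k a_k\alpha_k$ bounds a 2-chain $S'$ in $H'$, and $C:=S'+\sum_k a_k D_k$ is a 2-chain in $S^3$ with $\partial C=l$. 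Because $\mu$ lies in the interior of $H$ while $S'\subset H'$, we have $\mu\cdot S'=0$, so $\operatorname{lk}(l,\mu)=\sum_k a_k(\mu\cdot D_k)$.

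Symmetrically, the $\alpha_j$ bound disks in $H$, so $[\mu]_H=[l]_H=\sum_j b_j[\alpha'_j]_H$. The classes $\{[\alpha'_j]_H\}$ and $\{[D_k]\}$ are dual bases for the intersection pairing $H_1(H)\times H_2(H,\partial H)\to\mathbb{Z}$; an orientation check at the unique intersection point of $\alpha'_j$ with $D_k$ (which exists only when $j=k$ and equals the single point $\alpha_k\cap\alpha'_k$), using that the outward normal of $\Gamma=\partial H$ points toward $H'$, that $\partial D_k$ is oriented as $\alpha_k$, and that $i_A(\alpha_j,\alpha'_j)=-1$, shows that the local sign is $-1$. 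Hence $[\alpha'_j]_H\cdot[D_k]=-\delta_{jk}$, so $\mu\cdot D_k=-b_k$ and
\[
\operatorname{lk}(l,\mu)=\sum_k a_k(-b_k)=-\sum_k a_k b_k,
\]
matching \eqref{selflinking}.

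The main obstacle I foresee is precisely this last sign: one must thread the induced orientation conventions consistently at the intersection of $\alpha_j$ and $\alpha'_j$ (boundary orientation of $\Gamma$ from the specified normal, compatible orientation on each $D_k$, and the fixed normalization $i_A(\alpha_j,\alpha'_j)=-1$). Once that sign is nailed down, the remainder is a bilinear bookkeeping exercise; the genus-one case $l=\alpha_1+\alpha'_1$, for which the formula predicts self-linking $-1$, provides a quick sanity check.
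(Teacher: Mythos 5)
Your argument is correct, but it takes a genuinely different route from the paper's. The paper's proof is a direct diagrammatic count: it normalizes the planar diagram as in Remark~\ref{s3 diagram}, observes that the linking number of $l$ with its parallel copy picks up contributions only in the neighbourhoods of the pairs $(\alpha_j,\alpha'_j)$, and verifies by explicit local pictures that each pair consisting of one point of $l\cap\alpha_j$ and one point of $l\cap\alpha'_j$ contributes the product of the corresponding intersection signs to both the self-linking and the sum \eqref{selflinking}, with a further picture showing independence of the orientation of $l$. You instead pass to homology: expand $[l]$ in the symplectic basis, build a $2$-chain bounding $l$ out of the meridional disks $D_k$ of $H$ and a surface in $H'$, and evaluate $\operatorname{lk}(l,\mu)$ via the pairing $H_1(H)\times H_2(H,\partial H)\to\mathbb{Z}$. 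Your computation of the right-hand side ($i_A(l,\alpha_k)=b_k$, $i_A(l,\alpha'_k)=-a_k$, hence $-\sum_k a_kb_k$) is correct, and the one remaining sign does come out as you claim: with $i_A(\alpha_k,\alpha'_k)=-1$ the triple $(\dot\alpha'_k,\dot\alpha_k,n)$ is positively oriented ($n$ the normal pointing into $H'$), $D_k$ inherits the orientation with $\partial D_k=\alpha_k$ and outward vector $n$ at the boundary, and the pushed-off $\alpha'_k$ then crosses $D_k$ with sign $-1$; so $[\alpha'_j]\cdot[D_k]=-\delta_{jk}$ and $\operatorname{lk}(l,\mu)=-\sum_k a_kb_k$ as required. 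What your route buys is that all orientation bookkeeping collapses to this single local sign and it becomes manifest that the answer depends only on $[l]\in H_1(\Gamma;\mathbb{Z})$; what the paper's route buys is that the verification happens directly on the planar diagram the algorithm manipulates, so the local pictures double as the recipe for reading framings off the diagram in step (S3). Both are complete proofs.
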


\begin{proof}
Note that the self-linking of $l$ is equal to the linking number of $l$ and its parallel, codirected copy in $\Gamma$. Let us assume that our planar Heegaard diagram looks like one in Figure~\ref{fig:standardS3} (which is possible according to Remark~\ref{s3 diagram} with a help of an isotopy). We can also assume that $l$ meets $\alpha$ and $\alpha'$-curves transversely.

\begin{figure}[h!h!h!]
\centering
\includegraphics[width=.7\textwidth]{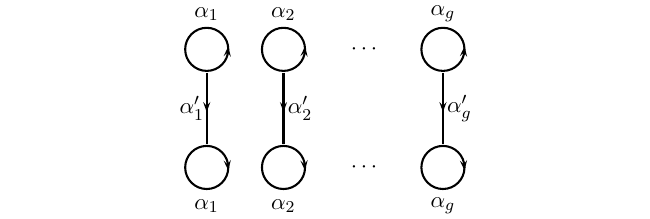}
\caption{Standard planar Heegaard diagram of $S^3$}\label{fig:standardS3}
\end{figure}

Note that apart of neighbourhoods of the pairs of curves $\alpha_j$, $\alpha'_j$ there is no contribution to the self-linking number of $l$. We can observe each of these neighbourhoods separately. Take a look at the leftmost picture in Figure~\ref{handle2}, which illustrates a neighbourhood of $\alpha_j$ and $\alpha'_j$ and a pair of intersection points---one of $l$ and $\alpha_j$ and the other of $l$ and $\alpha'_j$ (negative and positive intersection points are marked in black and red, respectively). It is evident that this pair of intersection points contributes by $-1$ to the sum (\ref{selflinking}).

\begin{figure}[h!h!h!]
    \centering
    \includegraphics[width=.3\textwidth]{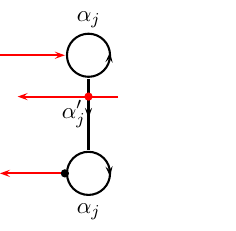} \includegraphics[width=.3\textwidth]{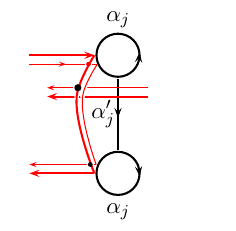} \includegraphics[width=.3\textwidth]{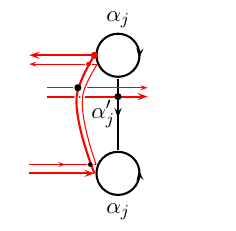}
    \caption{Curve $l$ meets the $j-$th handle}\label{handle2}
\end{figure}

On the other hand, this pair contributes to the self-linking of $l$ again by $-1$, which is justified by the picture in the middle of Figure~\ref{handle2} (parallel, codirected copies are added to the diagram as well as a ``bridge'' connecting two copies of the intersection point of $l$ and $\alpha_j$, which is ``above'' the diagram). The rightmost picture in Figure~\ref{handle2} shows that the orientation of $l$ does not affect the score. By summing over all these pairs, we get the desired result.
\end{proof}

We are ready to transform a given planar Heegaard diagram into a surgery through the following steps:

\begin{itemize}
\item[(S1)] Introduce the $\alpha'$-curves in the diagram, as well as all the curves involved in the composition $\delta=\theta_{\delta_n}\circ\ldots\circ\theta_{\delta_1}$ obtained by the algorithm.
\item[(S2)] Erase one copy of each $\alpha$-curve (assuming that $\alpha$ and $\alpha'$-curves are positioned as in Figure~\ref{fig:standardS3}, erase the bottom row of the copies of $\alpha$-curves). Moreover, erase all the curves from the diagram not involved in the composition $\delta$. Then introduce  ``bridges'' for every curve intersecting $\alpha$-curves. Bridges are disjoint arcs connecting the two endpoints of curves, which become arcs once we delete one of the $\alpha$-curves. These arcs are ``above'' every other curve in the configuration.
\item[(S3)] By applying Lemma~\ref{lemma} and Remark~\ref{rem4.1} introduce the framing tied to every curve involved in $\delta$.
\item[(S4)] The family of curves obtained by the previous steps does not represent a projection of a link since some intersections are not transversal and except for the ``bridges'', under and over crossings are not determined. Our (S4) transforms, step by step, this family of curves into a framed link: fix the curve $\delta_1$ and make a parallel copy of $\delta_2$ in $H$. This completely determines the under and over crossings of $\delta_2$ with the other curves present at the picture (see the left hand side of Figure~\ref{HDa}, where $\delta_1$ and $\delta_2$ are denoted by 1 and 2, respectively). Then repeat this with $\delta_3$ and so on. Note that in some cases the curves obtained by (S1)-(S3) produce multiple parallel copies. The example in Section~\ref{exmpl} provides a detailed illustrations of this step.
\end{itemize}

The following proposition is just a reformulation of \cite[Theorem~2]{L62}.

\begin{prop}\label{proof}
The surgery obtained by our algorithm results in a manifold homeomorphic to the manifold presented by the Heegaard diagram.
\end{prop}

\begin{proof}
Let $M$ be the manifold presented by a Heegaard diagram $(\Gamma,\alpha,\beta)$. Consider a proper embedding of $\Gamma$ with respect to $(\alpha,\alpha',H,H')$ and let $h\colon \Gamma\to\Gamma$ be an orientation preserving homeomorphism that for every $1\leq i\leq g$ maps $\alpha'_i$ to a curve isotopic to
$\beta_i$. Our manifold $M$ is
\[
H\cup_h H'=((H\times \{0\})\cup
(H'\times\{1\}))/\sim,
\]
where $\sim$ is such that for every $x\in\Gamma$ we have $(h(x),0)\sim (x,1)$.

\begin{figure}[h!h!h!]
    \centerline{\includegraphics[width=1\textwidth]{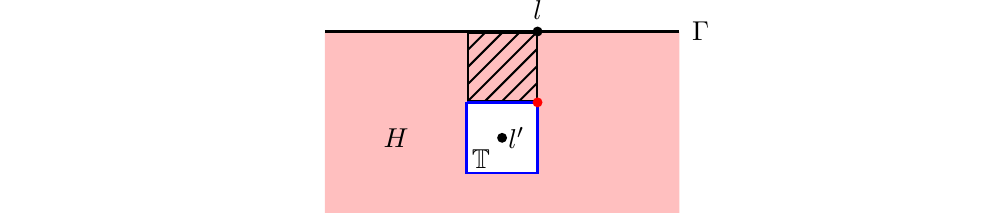}}
    \caption{A cross section of the handlebody $H$ normal to the curve $l$}
\label{digging}
\end{figure}

Assume for a moment that $h$ is just a Dehn twist $\theta_l$. Let $l'$ be a parallel copy of $l$ shallow inside $H$. Make a tubular neighbourhood $\mathbb{T}$ of $l'$ and remove it from $H$ (see Figure~\ref{digging}). One can easily extend $h$ to a homeomorphism $\chi\colon H-\mathbb{T}\to H-\mathbb{T}$, which is the identity in $H-\mathbb{T}$, except in the shaded region above $\mathbb{T}$ (see Figure~\ref{digging}), where it is defined in every layer as $\theta_l$. By relying on a straightforward topological argument (see for example \cite[Corollary~2.2]{FGOP}) one concludes that $(H-\mathbb{T})\cup_h H'$ is homeomorphic to $$(H-\mathbb{T})\cup_{\chi^{-1}\circ h} H'=(H-\mathbb{T})\cup H'=S^3-\mathbb{T}.$$

All that remains is to glue back the solid torus $\bar{\mathbb{T}}$ at the right position. For this, only important thing is how we glue a neighbourhood of a meridional disk of $\bar{\mathbb{T}}$. Assume that the blue square in Figure~\ref{digging} is the meridian of $\bar{\mathbb{T}}$. Then its left, right, and bottom side could be glued back by identity, while the top side must be glued to the path resembling the image under the left Dehn twist of a radius of the annulus in Figure~\ref{dehn twist} (see Figure~\ref{sewingTback}). (This switch from right to left Dehn twist may confuse the reader---precise and formal arguments are given in \cite[Section~2]{FGOP}.)

\begin{figure}[h!h!h!]
    \centerline{\includegraphics[width=1\textwidth]{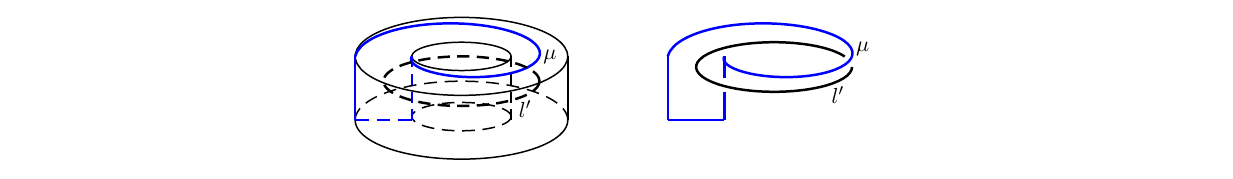}}
    \caption{Gluing the solid torus $\bar{\mathbb{T}}$ back following the attaching curve $\mu$}
\label{sewingTback}
\end{figure}

The left-hand side of Figure~\ref{framing1} illustrates the curve $l'$ (black), the attaching curve $\mu$ (blue) for the meridian and the red curve (the trace of the red dot in Figure~\ref{digging}), which is parallel both to $l$, and $l'$. At the right-hand side of Figure~\ref{framing1} the case when $\theta_l$ is replaced by $\theta_l^{-1}$ is illustrated. In the case of $\theta_l$, we conclude that the linking number of $l'$ and $\mu$ is by 1 greater than the self-linking of $l$. This linking number is the framing of the component $l'$ of the surgery.

\begin{figure}[h!h!h!]
    \centerline{\includegraphics[width=1\textwidth]{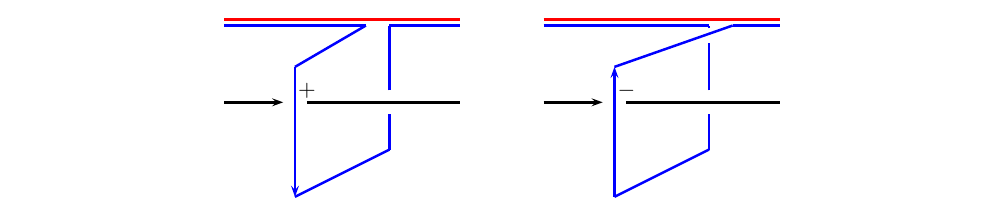}}
    \caption{Winding of the attaching curve}
\label{framing1}
\end{figure}

In the case when $h$ is a composition of several Dehn twists, we just repeat the above procedure save that we start from the rightmost Dehn twist, and the parallels of other curves are chosen deeper and deeper in $H$ (see \cite[paragraphs preceding Proposition~2.7]{FGOP} for a detailed explanation.)
\end{proof}

\section{An example}\label{exmpl}
The algorithm for finding an appropriate composition of Dehn twists and all the steps from above are illustrated in the following example.

Consider the manifold obtained by an identification of the facets of eight tetrahedra. Initially, the tetrahedra form an octahedron by identification of twelve pairs of facets. Then the ``opposite'' facets of this octahedron remain to be identified. This identification of the facets of the octahedron easily delivers the Heegaard diagram illustrated at the left-hand side of Figure~\ref{HD}. The $\alpha$-curves in this diagram are denoted by 1, 4, 7 and 12, while the $\beta$-curves are denoted by 3, 6, 9, 10. The $\alpha'$-curves are introduced
according to Remark~\ref{s3 diagram}, and are denoted by 2, 5, 8, 11.

\begin{figure}[h!h!h!]
    \centering
    {\includegraphics[width=1\textwidth]{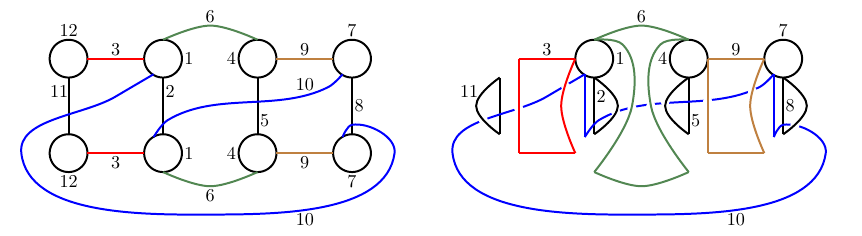}}
    \caption{Heegaard's diagram and the steps (S1) and (S2)}\label{HD}
\end{figure}

We start the algorithm for finding $\delta$ which maps the $\alpha'$-curves into the $\beta$-curves with $t=0$ and $\delta$ the identity (the empty composition of Dehn twists). The choice of matching $\alpha'$ and $\beta$-curves could simplify the procedure. Our choice is
$(\alpha'_1,\alpha'_2,\alpha'_3,\alpha'_4)=(2,5,8,11)$ and
$(\beta_1,\beta_2,\beta_3,\beta_4)=(3,6,9,10)$. With
$\beta_1'=\delta[\alpha'_1]=\alpha'_1=2$ we go to (A4). Luckily, for the
sequence of curves 2, 1, 3, the neighbours intersect in just one point and we have that the composition
$\theta_1\circ\theta_3\circ\theta_2\circ\theta_1$ maps the curve
$\alpha'_1=2$ into a curve isotopic to $\beta_1=3$. Then we go to (A1) with
$t=1$ and $\delta=\theta_1\circ\theta_3\circ\theta_2\circ\theta_1$.

With $\beta_2'=\delta[\alpha'_2]=\alpha'_2=5$ we go to (A4), and again in the
sequence of curves 5, 4, 6, which are disjoint from 3, the neighbours
intersect in just one point and we have that the composition
$\theta_4\circ\theta_6\circ\theta_5\circ\theta_4$ maps the curve
$\delta[\alpha'_2]=5$ into a curve isotopic to $\beta_2=6$. Then we go to (A1) with $t=2$ and $\delta=
\theta_4\circ\theta_6\circ\theta_5\circ\theta_4\circ\theta_1\circ
\theta_3\circ\theta_2\circ\theta_1$.

With $\beta_3'=\delta[\alpha'_3]=\alpha'_3=8$ we go to (A4), and again in the sequence of curves 8, 7, 9, which are disjoint from 3 and 6, the
neighbours intersect in just one point and we have that the composition
$\theta_7\circ\theta_9\circ\theta_8\circ\theta_7$ maps the curve
$\delta[\alpha'_3]=8$ into a curve isotopic to $\beta_3=9$. Then we go to (A1) with $t=3$ and $\delta=
\theta_7\circ\theta_9\circ\theta_8\circ\theta_7\circ
\theta_4\circ\theta_6\circ\theta_5\circ\theta_4\circ\theta_1\circ
\theta_3\circ\theta_2\circ\theta_1$.

With $\beta_4'=\delta[\alpha'_4]=\alpha'_4=11$ we go to (A5) since this curve intersects $\beta_4=10$ in a single point. The curves 10 and 11 are disjoint from 3, 6 and 9, and the composition
$\theta_{11}\circ\theta_{10}$ maps the curve $\delta[\alpha'_4]=11$ into a curve isotopic to $\beta_4=10$. Hence, the following composition of Dehn twists maps $\alpha'$-curves onto $\beta$-curves
\begin{equation*}
\theta_{11}\circ\theta_{10}\circ
\theta_7\circ\theta_9\circ\theta_8\circ\theta_7\circ
\theta_4\circ\theta_6\circ\theta_5\circ\theta_4\circ\theta_1\circ
\theta_3\circ\theta_2\circ\theta_1.
\end{equation*}

The steps  (S1) and (S2) in transforming our Heegaard diagram into a surgery are illustrated in Figure~\ref{HD}. In the step (S3) each curve at the right-hand side of Figure~\ref{HD}, except the blue one whose framing is 3, obtains the framing equal to 1. However, we will keep the numbers of curves until the last figure, where the framing is present. This is done in order to help the reader to follow the example. The step (S4) is illustrated by several pictures. At the left-hand side of Figure~\ref{HDa} we illustrate a parallel copy in $H$ of $\delta_2$, which is the curve denoted by 2 in our case. It is linked with the parallel copy of the curve denoted by 1 since it is pushed deeper into $H$. Then we make a parallel copy of $\delta_3$, which is the curve denoted by 3 in our case (see the right-hand side of Figure~\ref{HDa}).

\begin{figure}[h!h!h!]
    \centering
    \includegraphics[width=1\textwidth]{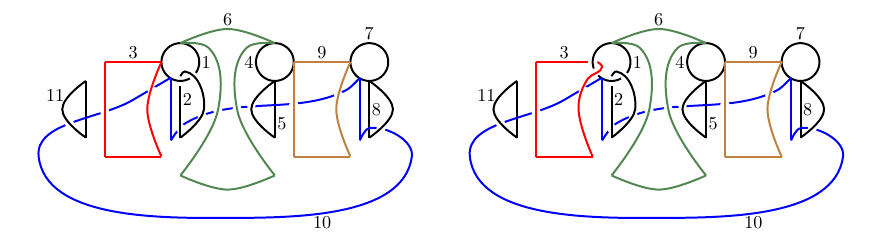}
    \caption{Step (S4)}\label{HDa}
\end{figure}

Next comes another parallel copy of the curve denoted by 1 inside a handle of $H$ (see the left-hand side of Figure~\ref{HDb}). We proceed in the same manner in order to make parallel copies of $\delta_5$, $\delta_6$ and $\delta_7$, which are the curves denoted by 4, 5 and 6 in our case (see the right-hand side of Figure~\ref{HDb}).
\begin{figure}[h!h!h!]
    \centering
    \includegraphics[width=1\textwidth]{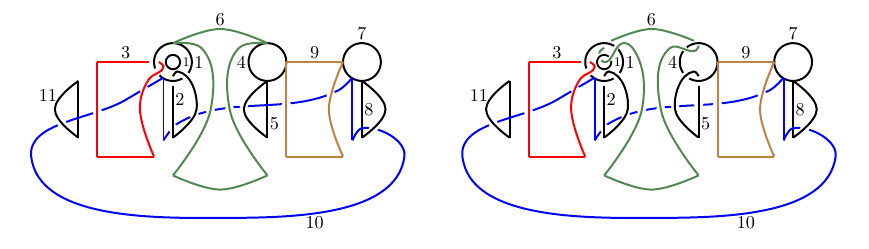}
    \caption{Step (S4) continued}\label{HDb}
\end{figure}

Eventually, we obtain a surgery diagram (with framing) illustrated in Figure~\ref{HDc}. It is straightforward, by using Kirby's calculus (see \cite{K78} and \cite{FR79}), to transform this surgery into an unknot with framing 2, justifying that the manifold in question is $RP^3$.

\begin{figure}[h!h!h!]
	\centering
    \includegraphics[width=1\textwidth]{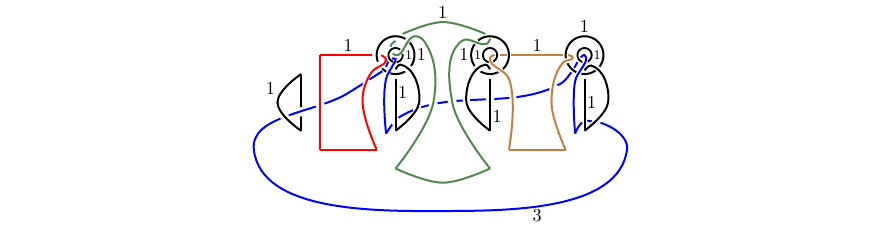}
    \caption{A surgery diagram for $RP^3$}\label{HDc}
\end{figure}

\textmd{\textbf{Acknowledgements.}}
We are grateful to the anonymous referee for helping us to improve the paper. We are also grateful to Vladimir Gruji\' c for pointing out to us the problem
of surgery presentation of small covers and to Danica Kosanovi\' c for
very useful comments concerning the previous version of this note.

\section*{Declarations}
\noindent\textbf{Funding} J.N. and Z.P. were supported by the Science Fund of the Republic of Serbia, Grant No. 7749891, Graphical Languages - GWORDS.

\noindent\textbf{Competing interests} The authors have no relevant financial or non-financial interests to disclose.

\end{document}